\newcommand{\DR}{{\mathcal{DR}}}
\newcommand{\D}{{\mathcal{D}}}
\newcommand{\mld}{{\rm mld}}
\newcommand{\loc}{{\rm loc}}
\newcommand{\orb}{{\rm orb}}
\newtheorem{thm}{Theorem}
\newtheorem{prop}{Proposition}
\newtheorem{rem}{Remark}
\newtheorem{conj}{Conjecture}
\newtheorem{lem}{Lemma}
\begin{document}
\title{Finiteness of algebraic fundamental groups}
\author{Chenyang Xu}
\date{\today}
\maketitle{}
\begin{abstract} We show that the algebraic local fundamental group of any klt singularity as well as the algebraic fundamental group of the smooth locus of any log Fano variety are finite. 
\end{abstract}

\tableofcontents

\bigskip

\noindent {\bf Keywords}: local fundamental group, log Fano, klt.  

\vspace{4mm}

\noindent {\bf MSC class}: 14J17, 14J45.

\section{Introduction}

We work over the field $\mathbb{C}$ of complex numbers. The study of the local topology of singularities has a long history. In the surface case, Mumford proved that a point in a normal surface has a trivial local fundamental group if and only if it is smooth (cf. \cite{Mumford61}).  Since then the investigation of the local topology of a singularity has been one of the most important tools to study singularities. It has been understood that there are three basic objects to study. Given a singularity, the {\it link} $L(0\in X)$ should carry essentially all the local topological information of a singularity. It has a continuous map to a topological space whose deformation retract is the {\it simple normal crossing variety} $E$ defined as the preimage of 0 for a log resolution $Y\to (X,0)$. The combinatorial gluing data of $E$ is then captured in the dual complex $\D(E)=\DR(0\in X)$.  See \cite{Kollar12} for more background.

Recently,  examples (cf. \cite{Kollar11b, KK11, Kollar12}) have been constructed to show that for a general singularity, the dual complex can be as complicated as possible. 
When $0\in X$ is log canonical, Koll\'ar also constructed 3-dimensional examples that have more complicated local topology than people expected. For instance the local fundamental group of such a singularity can be the fundamental group of any connected two dimensional manifold.
Koll\'ar indeed asked whether there is any nontrivial restriction of $\pi_1(E)$ (cf. \cite[25]{Kollar11b}).

In this note, we aim to show that the local  topology of a klt singularity should be much simpler than the log canonical case. In fact, the following conjecture is proposed by Koll\'ar.
\begin{conj}[\cite{Kollar11b}, 26] Let $0\in (X,\Delta)$ be a klt singularity. Then the local fundamental group $\pi^{\loc}_1(X,0):=\pi_1(L(0\in X))$ is finite. 
\end{conj}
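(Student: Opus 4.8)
The plan is to prove the \emph{topological} statement directly: bound all finite quotients of $G := \pi^{\loc}_1(X,0) = \pi_1(L(0\in X))$ uniformly, and then bridge from these finite quotients to the full group. After shrinking to a small contractible Stein neighborhood $U\ni 0$, the link is identified up to homotopy with the smooth locus $U^\circ := U\cap X_{\mathrm{reg}}$, so $G = \pi_1(U^\circ)$; since $L(0\in X)$ is a compact semialgebraic set, $G$ is finitely presented. I would work throughout with the orbifold structure carried by $\Delta$, observing that the extra orbifold generators have finite order bounded in terms of the coefficients of $\Delta$, so that bounding the honest topological $\pi_1$ suffices.

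First I would re-derive finiteness of the profinite completion $\hat G = \pi_1^{\text{\'et}}(U^\circ)$ with an effective bound via the normalized volume $\widehat{\mathrm{vol}}(X,0)$ of C.~Li: it is a strictly positive real number, bounded above by $n^n$ with equality exactly at smooth points (Liu--Xu), and it is multiplicative in quasi-\'etale covers, $\widehat{\mathrm{vol}}(Y,y) = \deg(Y/X)\cdot\widehat{\mathrm{vol}}(X,0)$ (Xu--Zhuang). Since $X$ is klt hence normal, $X_{\mathrm{reg}}$ has codimension $\ge 2$, so by Zariski--Nagata purity every connected finite topological cover of $U^\circ$ extends to a connected quasi-\'etale cover of $U$; multiplicativity then forces its degree to be at most $N := n^n/\widehat{\mathrm{vol}}(X,0)$. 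Thus every finite-index subgroup of $G$ has index $\le N$, reproving that $\hat G$ is finite, and yielding the universal quasi-\'etale cover $(V,0')\to(U,0)$, again klt, with $\pi_1^{\text{\'et}}(V^\circ)=1$ and $\widehat{\mathrm{vol}}(V,0')\le n^n$.

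It then remains to show that the residual kernel $K := \ker(G\to\hat G) = \pi^{\loc}_1(V,0')$ is trivial, and this is where the \emph{topological} statement genuinely departs from the algebraic one. By the main theorem of this paper applied to the klt space $V$, the profinite completion $\hat K = \pi_1^{\text{\'et}}(V^\circ)$ equals $1$; and since $K$ has finite index in the finitely presented $G$ it is itself finitely presented, so $K^{\mathrm{ab}}$ is a finitely generated abelian group with trivial profinite completion, whence $K^{\mathrm{ab}}=1$ and $K$ is perfect. Consequently the entire remaining difficulty is to upgrade ``$K$ has no nontrivial finite quotient'' to ``$K=1$,'' that is, to prove that $K$ is \emph{residually finite}: for a residually finite group $K\hookrightarrow \hat K$, so $\hat K=1$ forces $K=1$. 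I would attempt this by producing a faithful finite-dimensional representation of $K$ from geometry --- for instance from a polarized variation of Hodge structure on $V^\circ$, or from the holonomy of the conical K\"ahler--Einstein metric on the Sasaki link of the K-semistable Fano cone degeneration of $(V,0')$ supplied by the stable degeneration theorem (Li--Xu) --- and then invoke Malcev's theorem that a finitely generated linear group is residually finite, concluding $K=1$ and hence that $G=\hat G$ is finite.

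The main obstacle is precisely this last bridge. Normalized volume together with the boundedness of K-semistable Fano cone singularities (Li--Liu--Xu, Xu--Zhuang) controls all \emph{finite} covers of $U^\circ$ uniformly, but an infinite residual part carries no covering degree and is therefore invisible to any degree count, so a genuinely new input is unavoidable. I expect the cleanest route to be the construction of a faithful linear representation of $\pi^{\loc}_1$; should that prove elusive, the fallback is to show directly, again via boundedness, that a klt singularity with trivial \'etale local fundamental group admits no nontrivial connected topological cover whatsoever --- equivalently, that its topological universal cover is realized as the smooth locus of a klt singularity of finite normalized volume --- which would close the gap without passing through residual finiteness.
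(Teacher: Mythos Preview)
The paper does \emph{not} prove this statement: it is recorded as an open conjecture of Koll\'ar, and the paper's actual contribution is the strictly weaker Theorem~\ref{t-local}, finiteness of the \emph{profinite completion} $\hat{\pi}^{\loc}_1(X,0)$. So there is no ``paper's own proof'' of the full topological claim to compare against.

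Your proposal is honest about this. The first half re-derives the algebraic statement (finiteness of $\hat G$) by a route genuinely different from the paper's: you invoke the normalized volume $\widehat{\mathrm{vol}}$ of C.~Li, its multiplicativity under quasi-\'etale covers, and the bound $\widehat{\mathrm{vol}}\le n^n$ to get a uniform degree bound $N=n^n/\widehat{\mathrm{vol}}(X,0)$. The paper instead extracts a Koll\'ar component $E$ via Lemma~\ref{l-plt}, notes that $(E,\Gamma)$ is log Fano, bounds degrees of quasi-\'etale covers through Proposition~\ref{p-bound} (ultimately the boundedness from \cite{HMX12}) applied to the induced covers of $E$, and then kills the residual cyclic ramification along $E$ by cutting to a surface and invoking Mumford's computation of $H_1$ of a link. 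Your route is cleaner and more quantitative but rests on theory (Li, Liu--Xu, Xu--Zhuang) developed well after the paper; the paper's argument needs only \cite{BCHM10} and \cite{HMX12}.

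The genuine gap is exactly where you locate it: upgrading $\hat K=1$ to $K=1$ for $K=\pi^{\loc}_1(V,0')$. Neither of your proposed bridges is a theorem. There is no known mechanism producing a \emph{faithful} finite-dimensional representation of the local fundamental group of an arbitrary klt singularity; variations of Hodge structure on $V^\circ$ or holonomy of a conical K\"ahler--Einstein metric on the link of the Fano cone degeneration give linear representations, but faithfulness is precisely the issue and is not established in general. Your fallback---showing that a klt singularity with trivial \'etale local $\pi_1$ has trivial topological local $\pi_1$---is a restatement of the residual case of the conjecture itself, not a reduction. In sum, your proposal reproves Theorem~\ref{t-local} by an alternative method and then correctly isolates, but does not close, the gap separating it from the conjecture; as a proof of the conjecture it is incomplete at the decisive step.
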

In this direction, Koll\'ar and Takayama proved that $\pi_1(E)$ is trivial (cf. \cite{Kollar93, Takayama03}). However, this is not enough to conclude that $\pi_1(L(0\in X))$ is finite (e.g. consider a surface rational singularity that is not a quotient singularity).  

We can similarly define the local algebraic fundamental group $\hat{\pi}^{\loc}_1(X,0)$ which is just the pro-finite completion of $\pi_1^{\loc}(X,0)$. Our first theorem says that Koll\'ar's conjecture is true at least for  $\hat{\pi}^{\loc}_1(X,0)$.
\begin{thm}\label{t-local}
  Let $0\in (X,\Delta)$ be an algebraic klt singularity. Then the algebraic local fundamental group $\hat{\pi}^{\loc}_1(X,0)$ is finite. 
\end{thm}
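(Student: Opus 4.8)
The plan is to relate the singularity, via the minimal model program, to a log Fano variety of dimension one less, and to read off the local fundamental group from that log Fano. First observe that $\hat{\pi}_1^{\loc}(X,0)$ is finite if and only if there is a uniform bound on the degrees of the connected finite covers $X'\to X$ that are \'etale over a punctured neighbourhood of $0$: any two finite quotients of $\pi_1(L(0\in X))$ are dominated by a common finite one, so a bound on their orders forces a maximal quotient, while conversely an infinite profinite group has finite quotients of unbounded order. Such covers are automatically algebraic (by the Riemann existence theorem, extending over $0$ by normalization), and we may take them Galois. Now run an MMP over $X$ to extract a \emph{Koll\'ar component}: a projective birational $\mu\colon Y\to X$ with irreducible exceptional divisor $S$, $-S$ $\mu$-ample, and $(Y,S+\Delta_Y)$ plt, where $\Delta_Y=\mu_\ast^{-1}\Delta$. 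Adjunction along $S$ produces a klt pair $(S,\Delta_S)$ with $\Delta_S=\mathrm{Diff}_S(\Delta_Y)$, whose coefficients lie in a finite set determined by those of $\Delta$. Since $S$ is contracted to a point, $\mu^\ast(K_X+\Delta)|_S\sim_{\mathbb Q}0$, hence $-(K_S+\Delta_S)\sim_{\mathbb Q}a(S;X,\Delta)\,(-S|_S)$, and $-S|_S$ is ample because $-S$ is $\mu$-ample; thus $(S,\Delta_S)$ is a klt log Fano of dimension $n-1$.

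The next step is topological. An analytic neighbourhood $V$ of $S$ in $Y$ deformation retracts onto $S$, while $V\setminus S$ maps isomorphically onto a punctured neighbourhood of $0$ in $X$ and hence is homotopy equivalent to $L(0\in X)$. This exhibits $L(0\in X)$ as the total space of a Seifert $S^1$-fibration over the orbifold underlying $(S,\Delta_S)$ — with rational Euler class $S|_S$ — and yields an exact sequence
\[
1\longrightarrow C\longrightarrow \pi_1\bigl(L(0\in X)\bigr)\longrightarrow \pi_1^{\orb}(S,\Delta_S)\longrightarrow 1,
\]
in which $C$ is cyclic, generated by the class of a general fibre. Since $-S|_S$ is ample, $S|_S$ is numerically non-trivial, so $C$ is finite (its order divides the divisibility of $S|_S$ together with the orbifold multiplicities). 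Passing to profinite completions, $\hat{\pi}_1^{\loc}(X,0)$ is finite if and only if $\hat{\pi}_1^{\orb}(S,\Delta_S)$ is.

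It remains to prove that the orbifold fundamental group of a klt log Fano $(S,\Delta_S)$ is finite. The connected finite covers computing $\hat{\pi}_1^{\orb}(S,\Delta_S)$ are the finite morphisms $\pi\colon S'\to S$, \'etale over the smooth locus away from $\Delta_S$ and with ramification index $r\le m_i$ along each component of $\Delta_S$ of coefficient $1-1/m_i$; equipping $S'$ with $\Delta_{S'}$ defined by $K_{S'}+\Delta_{S'}=\pi^\ast(K_S+\Delta_S)$, the Hurwitz formula gives $\Delta_{S'}\ge0$ with coefficients in the same finite set, so $(S',\Delta_{S'})$ is again a klt log Fano of dimension $n-1$. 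A finite crepant cover does not decrease minimal log discrepancies, so every $(S',\Delta_{S'})$ is $\epsilon$-klt with $\epsilon=\mld(S,\Delta_S)>0$ independent of $\pi$, whereas $\bigl(-(K_{S'}+\Delta_{S'})\bigr)^{n-1}=(\deg\pi)\cdot\bigl(-(K_S+\Delta_S)\bigr)^{n-1}$. By the boundedness of $\epsilon$-klt log Fano pairs of fixed dimension with coefficients in a fixed DCC set, the pairs $(S',\Delta_{S'})$ form a bounded family, so their anticanonical volumes, hence the degrees $\deg\pi$, are bounded. This completes the argument.

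The step I expect to be the main obstacle is the last one. Finiteness of the orbifold $\pi_1$ of a log Fano is invisible on any single log resolution: the group is generated by finitely many elements of bounded finite order (meridians of $\Delta_S$ and loops around the exceptional divisors over the singular points), yet such a presentation is equally compatible with infinite groups, as hyperbolic triangle orbifolds show. Only the Fano positivity excludes this, and it does so through the boundedness theorem for $\epsilon$-klt Fano pairs — a deep ingredient. A secondary, more technical, difficulty is to make the Seifert picture precise where $S$ is singular or meets $\Delta_Y$, so that the cokernel of the fibre class is exactly the orbifold fundamental group controlled in the last step.
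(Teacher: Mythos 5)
Your overall skeleton (extract a Koll\'ar component, use adjunction to get a klt log Fano $(S,\Delta_S)$, and bound crepant covers of that log Fano by a boundedness theorem) is the same as the paper's. But the step you dismiss as a ``secondary, more technical difficulty'' is in fact a genuine gap, and it is precisely where the paper does something different. Your reduction of the local statement to $\hat{\pi}_1^{\orb}(S,\Delta_S)$ rests on the claim that the link $L(0\in X)$, i.e.\ the boundary of a neighbourhood of $E=S$ in $Y$, is a Seifert $S^1$-fibration over the orbifold $(S,\Delta_S)$, yielding the exact sequence $1\to C\to \pi_1(L(0\in X))\to \pi_1^{\orb}(S,\Delta_S)\to 1$ with $C$ cyclic and finite. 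No such fibration exists in general: it does in dimension two, or when $X$ is an orbifold cone over $(S,\Delta_S)$ (e.g.\ in the presence of a good $\mathbb{C}^*$-action), but for an arbitrary klt singularity the plt pair $(Y,E+f^{-1}_*\Delta)$ is not an orbifold disc bundle over $(E,\Gamma)$ --- $Y$ is singular, $E$ is singular, and there is no projection of the deleted neighbourhood onto $E$ at all, so neither the map $\pi_1(L(0\in X))\to\pi_1^{\orb}(S,\Delta_S)$ nor the cyclicity and finiteness of its kernel is established. (Establishing a precise relation of this kind in general is genuinely hard; the full topological finiteness is far beyond what your two lines provide.)

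The paper avoids this entirely: it takes an arbitrary tower of covers $X_{i+1}\to X_i$, \'etale off the preimages of $0$, pulls back the Koll\'ar component extraction to get morphisms $(E_{i+1},\Gamma_{i+1})\to(E_i,\Gamma_i)$ which are log \'etale in codimension one, and bounds these by Proposition \ref{p-bound}; once they stabilize, the residual covers are trivial along $E_i$, hence cyclic, generated by the meridian $\gamma$ of $E_i$, and their orders are bounded because, after cutting $Y_i$ by a general surface $S_i$, the class $[\gamma]$ lies in the kernel of $H_1(S_i\setminus C_i)\to H_1(C_i)$, which is torsion by Mumford's intersection-matrix computation. In other words, the paper only uses the true direction --- covers of the punctured germ induce crepant covers of the Koll\'ar component --- never a fibration going the other way; this tower-plus-meridian argument is the missing ingredient your proposal needs. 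A secondary remark: your last step invokes boundedness of $\epsilon$-klt log Fano pairs with DCC coefficients (a BAB-type statement), which is both far stronger than necessary and unavailable at the time; since $K_{S'}+\Delta_{S'}$ is the crepant pullback of $K_S+\Delta_S$, its Cartier index is bounded, and the volume bound of \cite[Corollary 1.8]{HMX12} for klt pairs with bounded Cartier index already bounds $\deg\pi$ --- this is exactly Proposition \ref{p-bound}.
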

We note that the main result of \cite{KK11} implies there exists an algebraic singularity $(X,0)$ with $\hat{\pi}_1(X,0)=\{e\}$ but $\pi_1(X,0)$ is an infinite group. 

The corresponding global result is the following.
\begin{thm}\label{t-global}
 Let $(X,\Delta+\Delta')$ be a projective klt pair with $\Delta'\ge 0$ and the coefficients of $\Delta$ are contained in $\{\frac{m-1}{m}|m\in \mathbb{N}\}$. Assume $-(K_X+\Delta+\Delta')$ is ample. 
 Denote by $X^0$  the maximal open locus where the restriction $(X,\Delta)$ is an orbifold. Then the algebraic orbifold fundamental group $\hat{\pi}^{\orb}_1(X^0,\Delta|_{X^0})$ is finite.  
\end{thm}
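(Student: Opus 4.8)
The plan is to reduce Theorem~\ref{t-global} to Theorem~\ref{t-local} by means of an orbifold cone construction. Write $V:=-(K_X+\Delta+\Delta')$, an ample $\mathbb{Q}$-Cartier divisor. Let $\pi\colon\mathcal{X}\to X$ be the Deligne--Mumford stack obtained from $X$ by taking, along each component $D_i$ of $\Delta$ with coefficient $\frac{m_i-1}{m_i}$, the $m_i$-th root stack; its coarse space is $X$, and the algebraic fundamental group of the open substack $\mathcal{X}^0$ lying over $X^0$ is exactly $\hat\pi^{\orb}_1(X^0,\Delta|_{X^0})$. The hypothesis that the coefficients of $\Delta$ lie in $\{\frac{m-1}{m}\}$ is what makes this root stack meaningful, and the computation $K_{\mathcal{X}}=\pi^*(K_X+\Delta)$ shows that $(\mathcal{X},\pi^{-1}_*\Delta')$ is a klt log Fano orbifold with $-(K_{\mathcal{X}}+\pi^{-1}_*\Delta')=\pi^*V$ orbifold ample. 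I would then fix $N$ with $L:=NV$ Cartier on $X$ and realizable as a line bundle on $\mathcal{X}$, and form the affine orbifold cone $C:=\mathrm{Spec}\bigoplus_{k\ge 0}H^0(\mathcal{X},L^{\otimes k})$, a normal affine variety, together with its vertex $0$ and the boundary divisor $\Delta_C'\ge 0$ obtained by coning off $\pi^{-1}_*\Delta'$.

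The first thing to check is that $0\in(C,\Delta_C')$ is a klt singularity. Blowing up the vertex produces a projective birational morphism whose exceptional divisor $E$ has coarse space $X$ and normal bundle a negative multiple of $L$; the standard cone--discrepancy computation, using $-(K_{\mathcal X}+\pi^{-1}_*\Delta')=\tfrac1N L$, gives log discrepancy $\tfrac1N>0$ for $E$ over $(C,\Delta_C')$, while away from the vertex $C$ is, locally for the $\mathbb{G}_m$-action, a product of $(X,\Delta+\Delta')$ with a punctured curve and hence klt. So $(C,\Delta_C')$ is klt, and Theorem~\ref{t-local} applies: $\hat\pi^{\loc}_1(C,0)=\hat\pi_1(L(0\in C))$ is finite.

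It remains to produce a surjection from this finite group onto $\hat\pi^{\orb}_1(X^0,\Delta|_{X^0})$. The punctured cone $C\setminus\{0\}$ is homotopy equivalent to the link $L(0\in C)$; deleting the cone over the (codimension $\ge 2$, hence real codimension $\ge 4$ in the link) non-orbifold locus $X\setminus X^0$ does not change $\pi_1$, and what remains is the total space of an orbifold $\mathbb{C}^*$-bundle (a Seifert fibration) over $\mathcal{X}^0$ classified by $L$ — a genuine variety since $L$ is ample. The homotopy exact sequence of this fibration gives $\pi_1(L(0\in C))\twoheadrightarrow\pi_1(\mathcal{X}^0)=\pi^{\orb}_1(X^0,\Delta|_{X^0})$, with kernel the central cyclic image of the $\mathbb{C}^*$-fibre; equivalently, every connected orbifold-\'etale cover of $(X^0,\Delta|_{X^0})$ extends by normalization to a finite cover of $X$ (crepant over $(X,\Delta)$, so still klt log Fano with standard coefficients) and then cones to a connected finite cover of the germ $0\in C$. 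Passing to profinite completions, $\hat\pi^{\orb}_1(X^0,\Delta|_{X^0})$ is a quotient of the finite group $\hat\pi^{\loc}_1(C,0)$, hence finite.

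The main obstacle is the first step: making the orbifold cone construction rigorous — ensuring $C$ is a normal variety with a klt singularity at $0$ and that the root-stack structure along $\Delta$ is correctly matched with the polarization $L$ — and this is precisely where the hypotheses (standard coefficients of $\Delta$, $\Delta'\ge 0$, ampleness of $-(K_X+\Delta+\Delta')$) get consumed; once the geometry is in place, the $\pi_1$-comparison is formal. A more self-contained but harder route would bypass Theorem~\ref{t-local} entirely and bound the degree $d$ of a connected orbifold-\'etale cover directly: such a cover is crepant, hence yields a klt log Fano pair with standard coefficients and anticanonical volume $d\cdot V^{\dim X}$, so a boundedness statement for the resulting family of log Fano pairs would bound $d$ — but that boundedness is the serious input, which is why reducing to Theorem~\ref{t-local} is the preferred strategy.
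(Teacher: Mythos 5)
Your reduction has two genuine gaps, and the first is fatal to the construction as written. You take $L=NV$ to be Cartier on $X$ and then cone the sections over the root stack; but a line bundle pulled back from the coarse space satisfies $H^0(\mathcal{X},L^{\otimes k})=H^0(X,L^{\otimes k})$, so your $C$ is just the ordinary affine cone over $X$ and retains no memory of the orbifold structure along $\Delta$. Correspondingly, the $\mathbb{C}^*$-bundle over $\mathcal{X}^0$ associated to such an $L$ is \emph{not} a variety: the $\mu_{m_i}$-stabilizers along $D_i$ act trivially on the fibres, so the total space is a stack with nontrivial stabilizers, and its coarse space (the thing actually sitting inside $C\setminus\{0\}$) has fundamental group surjecting only onto $\pi_1(X^0)$, not onto $\pi^{\orb}_1(X^0,\Delta|_{X^0})$. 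Concretely, covers ramified along $\Delta$ with index $m_i$ do not become \'etale over the punctured plain cone, so the claimed surjection $\pi_1(L(0\in C))\twoheadrightarrow \pi^{\orb}_1(X^0,\Delta|_{X^0})$ fails: for $X=\mathbb{P}^2$, $\Delta=\frac{m-1}{m}H$, $\Delta'$ a small ample divisor, the link of the cone with respect to $\mathcal{O}(N)$ has $\pi_1=\mathbb{Z}/N$ while the orbifold group is $\mathbb{Z}/m$, and there is no surjection when $m\nmid N$. To make the cone see $\Delta$ you must polarize by a $\mathbb{Q}$-divisor whose fractional part along each $D_i$ has denominator exactly $m_i$ (so the Seifert $\mathbb{C}^*$-bundle has multiplicity $m_i$ there), e.g.\ a suitable rational multiple of $-(K_X+\Delta+\Delta')$, and this compatibility is exactly the point your write-up waves at but does not secure.

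The second gap persists even after that repair: a $G$-cover of the orbifold $(X^0,\Delta|_{X^0})$, normalized over $X$ and coned, is \'etale only over the preimage of $X^0$, hence \'etale in codimension one on $C\setminus\{0\}$ but possibly ramified over the (codimension $\ge 2$, singular) cone over $X\setminus X^0$. Such covers are quotients of $\hat{\pi}_1$ of the punctured cone minus that set, which \emph{surjects onto} $\hat{\pi}^{\loc}_1(C,0)$ rather than being a quotient of it; your statement that deleting this locus ``does not change $\pi_1$'' is false on a singular space, where only surjectivity of $\pi_1(\mathrm{open})\to\pi_1(\mathrm{whole})$ holds, and that is the wrong direction here. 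So Theorem \ref{t-local} as stated (finiteness of $\hat{\pi}_1$ of the link) does not formally yield Theorem \ref{t-global}; one needs the stronger statement that quasi-\'etale covers of a klt germ stabilize (the ``smooth locus'' version), which is precisely the extra work carried out later by Greb--Kebekus--Peternell, as the paper's footnote records. Finally, note that the ``harder route'' you set aside at the end is in fact the paper's actual proof, and it is short: a finite quotient $G$ of $\hat{\pi}^{\orb}_1$ gives a cover $f:Y\to X$ with $f^*(K_X+\Delta)=K_Y+\Delta_Y$, $\Delta_Y\ge 0$, and $\deg f=|G|$ is bounded because ${\rm vol}(-(K_Y+\Delta_Y))=\deg(f)\cdot{\rm vol}(-(K_X+\Delta))$ is bounded by the boundedness theorem of Hacon--M$^{\rm c}$Kernan--Xu (Proposition \ref{p-bound}); the ``serious input'' you feared is exactly the cited \cite{HMX12}. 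Within this paper the cone route is also a detour in logical order, since Theorem \ref{t-local} in dimension $n+1$ is itself deduced from Proposition \ref{p-bound} in dimension $n$, which already gives Theorem \ref{t-global} directly.
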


We note that the question on the fundamental group of the smooth locus of a log Fano variety has attracted lots of interests. When the dimension is equal to 2 and there is no boundary divisor, we know that the topological fundamental groups are always finite (see \cite{MT84, GZ95, KM99}) and we indeed have a classification of them (cf. \cite{Xu09}). 

Beyond only being an analog, we could really connect the above two theorems using the local-to-global induction, namely, we could show that the local result  Theorem \ref{t-local} implies the global result Theorem \ref{t-global} in the same dimension\footnote{After the note was posted on arXiv, this approach was worked out by Greb, Kebekus and Peternell in `Etale covers of Kawamata log terminal spaces and their smooth loci', arXiv:1302.1655.}, and then the global result gives the local one for one dimension higher.   However, to make the proof shorter, we  present the proof of Theorem \ref{t-global} using the boundedness theorem in \cite{HMX12}, and then establish Theorem \ref{t-local}.

\vspace{4mm}
\noindent{\bf  Acknowledgement}: We are in debt to J\'anos Koll\'ar for helpful discussions and to the anonymous referee for comments which improves the exposition. The author is partially supported by grant  DMS-1159175, Warnock Presidential Endowed Chair and the  grant 'Recruitment Program of Global Experts'.





\section{Finiteness of algebraic fundamental groups }

\noindent {\bf Notation and Convention:} We follow the terminology in \cite{KM98}. We call a finite morphism between two log pairs $f:(Y,\Delta_Y)\to (X,\Delta)$ {\it log \'etale in codimension }1 if $f^*(K_X+\Delta)=K_Y+{\Delta_Y}$. We note that here $\Delta$ and $\Delta_Y$ are effective divisors. 
A projective log pair $(X,\Delta)$ is called {\it log Fano} if $(X,\Delta)$ has klt singularities and $-(K_X+\Delta)$ is ample. Given a point $p$ on a log pair $(X,\Delta)$, we use $\mld(p,X,\Delta)$ to mean the  {\it minimal log discrepancy} $\min _E a(E,X,\Delta)+1$, where the minimum  runs over all exceptional divisors $E$ whose center on $X$ is $p$.

\vspace{8mm}
Let $f:Y\to X$ be a morphism induced by a surjection $\hat{\pi}^{\orb}_1(X^0,\Delta|_{X^0})\to G$ for some finite group $G$, then if we write $f^*(K_X+\Delta)=K_Y+\Delta_Y$, we have $\Delta_Y\ge 0$. Thus, if we write $f^*(K_X+\Delta+\Delta')=K_{Y}+\Delta_Y'$, it satisfies $\Delta_Y'\ge \Delta_Y \ge 0$. Therefore,  Theorem \ref{t-global} immediately  follows from the following result.

\begin{prop}\label{p-bound}
Let $(X,\Delta)$ be a log Fano variety. 
Let $f:Y\to (X,\Delta)$ be a finite surjective morphism, such that if we write 
$$f^*(K_X+\Delta)=K_{Y}+\Delta_Y,$$
then  $\Delta_Y$ is effective. Then the degree of $f$ is bounded by a constant $N$ only depending on $(X,\Delta)$. 
\end{prop}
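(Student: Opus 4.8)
The plan is to show that the cover $(Y,\Delta_Y)$ is itself a log Fano variety whose singularities and boundary coefficients are controlled purely in terms of $(X,\Delta)$, and then to extract the bound on $\deg f$ from the boundedness theorem of \cite{HMX12} via the projection formula. First I would record that $(Y,\Delta_Y)$ is a log Fano variety of dimension $n:=\dim X$: indeed $-(K_Y+\Delta_Y)=f^*\big(-(K_X+\Delta)\big)$ is ample because $f$ is finite and $-(K_X+\Delta)$ is ample, and since $f^*(K_X+\Delta)=K_Y+\Delta_Y$ with $\Delta_Y\ge 0$ and $(X,\Delta)$ klt, the pair $(Y,\Delta_Y)$ is klt by the standard behaviour of klt pairs under finite crepant morphisms \cite{KM98}.

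The heart of the preparation is to bound, uniformly, the coefficients of $\Delta_Y$ and the singularities of $(Y,\Delta_Y)$. Writing $K_Y=f^*K_X+R$ with $R\ge 0$ the ramification divisor, so that $\Delta_Y=f^*\Delta-R$, a prime divisor $D\subset Y$ lying over a prime divisor $B\subset X$ with ramification index $r\ge 1$ acquires coefficient $1-r(1-b)$ in $\Delta_Y$, where $b\in\{0\}\cup\{\text{coefficients of }\Delta\}$ is the coefficient of $B$ in $\Delta$. The hypothesis $\Delta_Y\ge 0$ then forces $r\le(1-b)^{-1}$, which is bounded since $(X,\Delta)$ is klt and $\Delta$ has finitely many coefficients; hence the coefficients of $\Delta_Y$ lie in a fixed finite set $I=I(X,\Delta)\subset[0,1)$. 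Likewise, for any divisorial valuation $v$ over $Y$ restricting to $w$ over $X$ with ramification index $e\ge 1$, the crepant covering formula for log discrepancies gives $a(v;Y,\Delta_Y)+1=e\big(a(w;X,\Delta)+1\big)\ge a(w;X,\Delta)+1\ge\epsilon_0$, where $\epsilon_0=\epsilon_0(X,\Delta)>0$ is any number making the fixed klt pair $(X,\Delta)$ $\epsilon_0$-klt (such a number exists, e.g.\ computed on a log resolution); so $(Y,\Delta_Y)$ is $\epsilon_0$-klt.

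With $(Y,\Delta_Y)$ thus constrained, I would invoke the boundedness theorem of \cite{HMX12}: $\epsilon_0$-klt log Fano pairs of dimension $n$ with boundary coefficients in the finite (in particular DCC) set $I$ have anticanonical volume bounded above by a constant $V=V(n,\epsilon_0,I)=V(X,\Delta)$. The projection formula then gives $\big(-(K_Y+\Delta_Y)\big)^n=\big(f^*(-(K_X+\Delta))\big)^n=\deg(f)\cdot\big(-(K_X+\Delta)\big)^n$, and since $\big(-(K_X+\Delta)\big)^n$ is a fixed positive number we conclude $\deg(f)\le V\big/\big(-(K_X+\Delta)\big)^n=:N$, a constant depending only on $(X,\Delta)$.

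I expect the genuine obstacle to be the boundedness input used in the penultimate step: everything else is formal manipulation with crepant pullbacks and intersection numbers, while the substance is the boundedness of $\epsilon$-klt log Fano pairs of fixed dimension with DCC coefficients, which is exactly what \cite{HMX12} supplies. I would also stress that the effectivity hypothesis $\Delta_Y\ge 0$ enters essentially in the coefficient-bounding step: without it the ramification indices are unconstrained -- the degree-$m$ crepant cover $\mathbb{P}^n\to\mathbb{P}(1,\dots,1,m)$ already shows this -- and no uniform bound on $\deg f$ can hold.
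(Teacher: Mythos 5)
Your architecture is the same as the paper's: observe that the crepant pullback $(Y,\Delta_Y)$ is a klt log Fano of dimension $n$, bound $\mathrm{vol}\bigl(-(K_Y+\Delta_Y)\bigr)$ uniformly using \cite{HMX12}, and divide by $\bigl(-(K_X+\Delta)\bigr)^n$ via the projection formula. Your preparatory steps are also correct: the coefficient formula $1-r(1-b)$, the fact that $\Delta_Y\ge 0$ forces $r\le (1-b)^{-1}$ (and $r=1$ over divisors not in ${\rm Supp}\,\Delta$), and the crepant formula $a(v;Y,\Delta_Y)+1=e\bigl(a(w;X,\Delta)+1\bigr)$ showing $(Y,\Delta_Y)$ is $\epsilon_0$-klt.

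The gap is the boundedness input you invoke. The statement ``$\epsilon$-klt log Fano pairs of dimension $n$ with coefficients in a DCC (or finite) set have anticanonical volume bounded above'' is \emph{not} in \cite{HMX12}: it is essentially the volume-boundedness part of the Borisov--Alexeev--Borisov circle of problems, established only later by Birkar, and it does not follow from ACC for log canonical thresholds together with global ACC, which is what \cite{HMX12} supplies. What \cite[Corollary 1.8]{HMX12} actually gives, and what the paper uses, is a bound $C(M,n)$ on the volume for klt pairs of dimension $n$ whose log canonical divisor has Cartier index dividing $M$ and is anti-ample. Your setup satisfies that hypothesis for free, and this is the paper's one-line argument: pick $M$ with $M(K_X+\Delta)$ Cartier; then $M(K_Y+\Delta_Y)=f^*\bigl(M(K_X+\Delta)\bigr)$ is Cartier, so the cited corollary applies and $\deg f\le C(M,n)/\bigl(-(K_X+\Delta)\bigr)^n$. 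With that substitution your proof closes (and the coefficient and mld computations become superfluous; note that klt together with $M(K_Y+\Delta_Y)$ Cartier already gives $1/M$-klt). A minor side point: your example $\mathbb{P}^n\to\mathbb{P}(1,\dots,1,m)$ has a base varying with $m$, so it does not literally exhibit unbounded degree for a \emph{fixed} $(X,\Delta)$; degree-$m$ self-maps of $\mathbb{P}^1$ do.
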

\begin{proof}
 Let $M\in \mathbb{N}$ satisfy $M(K_X+\Delta)$ is Cartier, then $M(K_Y+\Delta_Y)$ is Cartier. It follows from \cite[Corollary 1.8]{HMX12} that 
$${\rm vol}(K_Y+\Delta_Y)=\deg(f)\cdot {\rm vol}(K_X+\Delta)$$
is bounded from above by a constant $C=C(M,n)$ which only depends on $M$ and $n=\dim (X)$. Thus $\deg(f)$ is bounded from above by 
$$\frac{C}{(-K_X-\Delta)^n}.$$
\end{proof}

As we mentioned in the introduction, here we use the strong boundedness result in \cite{HMX12}. We can argue more straightforwardly using Theorem \ref{t-local} in the same dimension. Later we will see, to prove Theorem \ref{t-local} we only need Proposition \ref{p-bound} in one dimension lower. 


The next lemma associates to every klt singularity an exceptional log canonical place after adding certain auxiliary divisor. Although the exceptional log canonical place depends on the choice of the auxiliary divisor, this construction proved to be useful for many questions (cf. \cite{Kollar07,HX09,LX11}).
\begin{lem}\label{l-plt}
Let $p\in (X,\Delta)$ be a klt point. There exists a $\mathbb{Q}$-divisor $H$ on $X$  and a birational morphism $f:Y\to X$ from a normal variety such that 
\begin{enumerate}
\item $Y$ has a prime divisor $E$ such that ${\rm Center}_X(E)=p$, $-(K_Y+f^{-1}_*\Delta+E)$ and $-E$ are ample over $X$ (in particular, ${\rm Ex}(f)={\rm Supp}(E)$), and
\item $(X,\Delta+H)$ is klt on $X\setminus \{p\}$, $\mld(p,X, \Delta+H)=0$ and $E$ is the unique divisor such that the discrepancy $a(E,X,\Delta+H)=-1$.
\end{enumerate}
\end{lem}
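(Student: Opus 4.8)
The plan is to produce the pair $(X,\Delta+H)$ in two stages: first an analytic/local construction of a suitable exceptional divisor with center $p$, then a relative minimal model program over $X$ to extract exactly that divisor and arrange the stated positivity.

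First I would construct an auxiliary divisor $H$. Take a general complete intersection of high-multiplicity hypersurfaces through $p$ (or, more precisely, a general member $D$ of a sufficiently ample linear system $|mL|$ passing through $p$ with high multiplicity), and set $H=\frac{c}{m}D$ for a carefully chosen coefficient $c$. The point is to pick the multiplier so that the log canonical threshold of $H$ with respect to $(X,\Delta)$ at $p$ is exactly realized, i.e. $(X,\Delta+H)$ becomes log canonical but not klt precisely at $p$; after a general choice of $D$ (Bertini away from $p$) the only non-klt locus is $\{p\}$, so $\mld(p,X,\Delta+H)=0$ and the minimal log discrepancy is computed at $p$. A general-position/tie-breaking argument (as in Koll\'ar's treatment of adjunction and isolated lc centres) lets me perturb $H$ so that there is a \emph{unique} divisorial valuation $E$ with $a(E,X,\Delta+H)=-1$ and $\mathrm{Center}_X(E)=p$; this is where I expect the most bookkeeping, since one must simultaneously guarantee uniqueness of the lc place and klt-ness away from $p$.

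Next I would extract $E$. Take a log resolution and run a $(K+\Delta+H)$-MMP (with scaling) over $X$ relative to $p$, contracting every exceptional divisor except the chosen lc place $E$; this is a standard application of the relative MMP for klt/dlt pairs in the now-established generality (BCHM). The output is a birational morphism $f:Y\to X$ with a single exceptional prime divisor $E$, so $\mathrm{Ex}(f)=\mathrm{Supp}(E)$ and $\mathrm{Center}_X(E)=p$, and by construction $(Y,f^{-1}_*(\Delta+H)+E)$ is the log canonical model over $(X,\Delta+H)$, giving part (2). Because $f$ is a divisorial contraction of the single divisor $E$, the relative Picard number of $Y/X$ is $1$ and $-E$ is $f$-ample (it is $f$-negative on the extremal ray it spans). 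For the ampleness of $-(K_Y+f^{-1}_*\Delta+E)$ over $X$: since $a(E,X,\Delta+H)=-1$ we have $K_Y+f^{-1}_*(\Delta+H)+E=f^*(K_X+\Delta+H)$, hence $-(K_Y+f^{-1}_*\Delta+E)\sim_{\mathbb{Q},X} f^{-1}_*H$ up to the ample class $-E$; choosing $H$ with support passing through $p$ makes $f^{-1}_*H$ relatively nef, and adding a small multiple of the $f$-ample $-E$ makes the combination $f$-ample. (If needed, one slightly enlarges $H$ or rescales $c$ so that $-(K_Y+f^{-1}_*\Delta+E)$ is genuinely $f$-ample rather than merely $f$-nef.)

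The main obstacle I anticipate is the uniqueness in part (2): arranging a single exceptional lc place over $p$ while keeping $(X,\Delta+H)$ klt everywhere else. The cleanest route is a tie-breaking construction — start with some $H_0$ making $p$ an lc centre, pass to a dlt modification, and perturb within the finitely many lc places over $p$ until only one survives with discrepancy $-1$ — but verifying that the perturbation does not create new lc centres elsewhere requires care with the general-position hypotheses on $D$. Once uniqueness is secured, the MMP step and the positivity assertions in (1) are essentially formal consequences of the relative Picard number being one.
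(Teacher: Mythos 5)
Your overall strategy (tie-breaking to produce an auxiliary divisor $H$ with a unique exceptional lc place $E$ centered at $p$, then extracting $E$ by a relative MMP over $X$) is the same as the paper's. The genuine gap is in how you obtain the positivity statements in (1). You assert that because $f:Y\to X$ has the single exceptional prime divisor $E$, the relative Picard number $\rho(Y/X)$ is $1$ and hence $-E$ is $f$-ample ``on the extremal ray it spans.'' Neither claim is automatic: a birational morphism whose exceptional locus is one prime divisor can easily have $\rho(Y/X)>1$ (e.g.\ the resolution of the cone over a quadric surface), and, more to the point, the model produced by an MMP over $X$ is only a \emph{minimal} model over $X$, so the relevant divisor class is a priori only $f$-nef, not $f$-ample. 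Indeed, in the paper's construction the MMP with scaling of a general ample divisor $L'$ produces a $\mathbb{Q}$-factorial model $W$ on which $-E_W$ is merely nef over $X$, and one must then pass to the relative log canonical model $Y$ of the perturbed pair $(W,h^{-1}_*(\Delta+tL)+E_W+(1+\sigma)\phi_*L')$ — using that $\phi_*A\sim-\lambda E_W$ — with $W\to Y$ small, precisely in order to make $-E$ ample over $X$. Whenever $W\to Y$ is not an isomorphism, $W$ itself is a counterexample to your ``single exceptional divisor $\Rightarrow -E$ is $f$-ample'' step, so this step cannot be waved through as formal; it is the substantive content of part (1). Your related remark that ``$(Y,f^{-1}_*(\Delta+H)+E)$ is the log canonical model over $(X,\Delta+H)$'' does not help either, since the crepant pullback $f^*(K_X+\Delta+H)$ is relatively trivial and so distinguishes no model and yields no ampleness.

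Two smaller points. First, your claim that choosing $H$ through $p$ makes $f^{-1}_*H$ relatively nef is not a valid general principle; what is true is that $f^{-1}_*H\sim_{X,\mathbb{Q}}-cE$ with $c=1+a(E,X,\Delta)>0$, so everything again reduces to knowing that $-E$ is $f$-ample. Once that is established, the cleanest route to the ampleness of $-(K_Y+f^{-1}_*\Delta+E)$ is the paper's one-line computation $-(K_Y+f^{-1}_*\Delta+E)\sim_{X,\mathbb{Q}}-(1+a(E,X,\Delta))E$, using only that $(X,\Delta)$ is klt. Second, your tie-breaking step is essentially the paper's (a general ample $L$ through $p$ computing the lc threshold, perturbed by a small multiple of an exceptional ample divisor $A$ on a log resolution), and that part of your outline is fine, though the auxiliary ample perturbation $L'=\epsilon g^*L+\delta A$ is not optional in the paper: it is exactly what later drives the scaling MMP and the ampleness of $-E$, and it is absorbed into the final $H=tL+(h\circ\phi)_*L'$.
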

\begin{proof}
We first choose an ample $\mathbb{Q}$-divisor $L$ on $X$ such that $(X,\Delta+L)$ is log canonical at $p$ but klt at $X\setminus \{p\}$. Take a log resolution  $g:Z\to (X,\Delta+L)$  such that ${\rm Ex}(g)$ supports a fixed relative ample divisor $A$ over $X$. 
 We write $$g^*(K_X+\Delta+(t+\epsilon) L )\sim_{\mathbb{Q}} K_Z+g^{-1}_*(\Delta+tL)+\sum^k_{i=1} a_i E_i+(\epsilon g^*L+\delta A),$$
 where $0<\delta\ll \epsilon $ such that $\epsilon g^*L+\delta A\sim_{\mathbb{Q}} L'$ is a general ample $\mathbb{Q}$-divisor and each $a_i$ depends on $t$ and $\delta A$. Choosing $L$ a general ample $\mathbb{Q}$-divisor with small coefficients passing thought $p$, and using $\delta A$ to perturb, we can assume that there exists a $t_0>0$ such that  in the above formula, if we take $t=t_0$, there is a unique $a_i$, say $a_1$, which is equal to $1$, other $a_i<1$ $(i\ge 2)$ and the center of $E_Z:=E_1$ on $X$ is $p$.

 Considering the pair $(Z, g^{-1}_*(\Delta+tL)+E_Z+\sum^k_{i=2} E_i+L')$,
 we have
 $$K_Z+g^{-1}_*(\Delta+tL)+E_Z+\sum^k_{i=2} E_i+L'\sim_{X,\mathbb{Q}}\sum^m_{i=2}(1-a_i)E_i.$$
 
 We run an $(K_Z+g^{-1}_*(\Delta+tL)+E_Z+\sum^k_{i=2} E_i+L')$-MMP with scaling of $L'$  over $X$. By \cite{BCHM10}, this MMP will terminate with a good minimal model $h:W\to X$. As it contracts all the divisors whose supports are contained in the stable base locus, we know that $\phi:Z\dasharrow W$ precisely contracts all $E_i$ for $i\ge 2$.
 Thus  we have the divisorial part of ${\rm Ex}(h)$ is $E_W$, and on $W$ 
 $$K_W+h^{-1}_*(\Delta+tL)+E_W+\phi_*L'\sim_{X,\mathbb{Q}} 0,$$
 where $E_W$ denotes the push forward of $E_Z$ on $W$.
 Since $(W,h^{-1}_*(\Delta+tL)+E_W+\phi_*L')$ is dlt with only one divisor of coefficient 1, it is indeed plt. Furthermore, since it is an MMP with scaling of $L'$, by the definition of MMP with scaling (cf. \cite{BCHM10}), we know for some sufficiently small $\sigma>0$,
 $$K_W+h^{-1}_*(\Delta+tL)+E_W+(1+\sigma)\phi_*L' $$ is nef over $X$.
 We let $Y$ be the log canonical model of $(W, h^{-1}_*(\Delta+tL)+E_W+(1+\sigma)\phi_*L' )$, i.e.,
 $$Y={\rm Proj } \oplus_d h_*\mathcal{O}_W(d(K_W+h^{-1}_*(\Delta+tL)+E_W+(1+\sigma)\phi_*L' )).$$
As $\phi$ contracts $E_i$ $(i\ge 2)$, we know $\phi_*A\sim -\lambda E_W$ for some $\lambda>0$. Thus
$$K_W+h^{-1}_*(\Delta+tL)+E_W+(1+\sigma)\phi_*L' \sim_{X,\mathbb{Q}}\sigma\delta \phi_*A= -\sigma\delta\lambda E_W,$$
which is nef. 

Define $H=tL+(h\circ \phi)_*L'$. We see that $W\to Y$ can not contract $E_W$, thus it is a small morphism. As $(W, h^{-1}_*(\Delta+tL)+E_W+\phi_*L' )$ is plt, so is 
$(Y, E+f_*^{-1}(H+\Delta))$ where $E$ denotes the push forward of $E_W$ on $Y$ and we know $-E$ is $f$-ample.

Since $(X,\Delta)$ is klt, we know that
$$-(K_Y+f^{-1}_*\Delta+E)\sim_{X,\mathbb{Q}}- (1+a(E,X,\Delta))E$$
is $f$-ample.
 \end{proof}

\begin{rem}
In general, given a projective morphism $g:(X,\Delta)\to S$ from a klt pair to a normal varieties such that $g_*(\mathcal{O}_X)=\mathcal{O}_S$ and $-(K_{X}+\Delta)$ is ample over $S\setminus \{p\}$ for some point $p\in S$, then the same argument shows that we can find $f:Y\to S$ such that
\begin{enumerate}
\item $X$ and $Y$ are isomorphic over $S\setminus \{ p\}$,
\item  $f^{-1}(p)$ is an irreducible divisor $E$,
\item if we let $\Delta_Y$ be the birational transform of $\Delta$ on $Y$, then $-(K_Y+\Delta_Y+E)$ is $f$-ample. 
\end{enumerate}
\end{rem}

In this construction, we call $E$ a {\bf Koll\'ar component} of $(X,\Delta)$. As we mentioned, it depends on the auxiliary $\mathbb{Q}$-divisor $H$. If we write $(K_Y+f^{-1}_*\Delta+E)|_E=K_E+\Gamma$, where $\Gamma={\rm Diff}_{E}f_{*}^{-1}\Delta$ as defined in \cite[Section 16]{Kollaretal},
then the pair $(E,\Gamma)$ is log Fano. 

Koll\'ar component was first studied in \cite{Kollar07}. Later in \cite{LX11}, it was interpreted as the only remaining exceptional divisor after an MMP sequence scaled by a carefully chosen ample divisor as above to make it to be log Fano.

\begin{proof}[Proof of Theorem \ref{t-local}] Let $0\in (X,\Delta)$ be an  algebraic singularity on a pair $(X,\Delta)$.  Applying the construction in Lemma \ref{l-plt}, we denote by $f: Y\to X$ a morphism which precisely extracts a Koll\'ar component $E$. 

Now let 
$$\cdots\to (X_i,p_i)\to (X_{i-1},p_{i-1})\to \cdots\to (X_0,p_0)=(X,0),$$
be a sequence of finite morphisms such that each one is finite and \'etale for the restriction $X_{i+1}\setminus \{ p_{i+1}\} \to X_{i}\setminus \{p_{i}\}$ and Galois for $X_{i+1}\setminus \{ p_{i+1}\} \to X\setminus \{0\}$, we want to show it stabilizes for sufficiently large $i$.

For each $i$, we let $Y_i$ be the normalization of the main component of  $X_i\times _X Y$ with the morphism $f_i:Y_i\to X_i$.  Thus there are commutative diagrams
\begin{diagram}
E_{i+1}\subset Y_i & \rTo^{\psi_i} & E_{i}\subset Y_{i}\\
\dTo^{f_{i+1}} & & \dTo_{f_{i}}\\
p_{i+1}\in X_{{i+1}} & \rTo & p_{i}\in X_{i} & .
\end{diagram} 
Denote the pull back of $\Delta$ on $X$ by $\Delta_i$. Let $(K_{Y_i}+f_{i*}^{-1}\Delta_i+E_i)|_{E_i}=K_{E_i}+\Gamma_i$.
Since $\psi_i^*(f_{i*}^{-1}\Delta_{i})= f_{i+1*}^{-1}\Delta_{i+1}$, we conclude that 
$$\psi_i^*(K_{Y_{i}}+f_{i*}^{-1}\Delta_{i}+E_{i})=(K_{Y_{i+1}}+f_{i+1*}^{-1}\Delta_{i+1}+E_{i+1}).$$
Restricting on the Koll\'ar components, this implies that the induced morphism $$\psi_i|_{E_{i+1}}:(E_{i+1},\Gamma_{i+1})\to (E_{i},\Gamma_{i})$$ is  log \'etale in codimension 1. 

 It follows from Proposition \ref{p-bound} that there exists an $M\in \mathbb{N}$ such that $\psi_i|_{E_i}$ is an isomorphism for $i>M$. Fix such an $i>M$, thus $\psi_i$ is a finite morphism, totally ramified over $E_i$. Let $\gamma$ be the element in $\pi_1(L(p_i\in X_i))$ corresponding to the loop around a general point of $E_i$. We only need to verify the order of  $\gamma$ is finite. Cutting $Y_{i}$ to a surface $S_{i}$, and taking the Cartesian product, we have the following diagram 
\begin{diagram}
C_{i+1}\subset S_{i+1} & \rTo & C_{i}\subset S_{i}\\
\dTo^{\phi_{i+1}} & & \dTo_{\phi_{i}}\\
p_i\in T_{i+1} & \rTo & p_{i}\in T_{i}
\end{diagram} 

As the corresponding ramified covering is trivial along  $C_{i}={\rm Ex }(\phi_{i})$, we know that if we let the surjection $\hat{\pi}^{\loc}_1(T_{i},p_{i})\to G$ correspond to the covering, then $G$ is a finite cyclic group, which is generated by the image of $\gamma$. Thus ${\pi}^{\loc}_1(T_{i},p_{i})={\pi}_1(S_{i}\setminus C_{i})\to G$ indeed factors through $H_1(S_{i}\setminus C_{i})$. However, the homolog class $[\gamma]$ is in the kernel of  
$$H_1(S_{i}\setminus C_{i})\to H_1(S_{i})=H_1(C_{i}).$$
By Mumford's calculation on the normal surface singularity (cf. \cite[page 235]{Mumford61}), we know that $[\gamma]$ is a torsion element.  Thus for any $j\gg i$, $T_{j+1}\to T_{j}$ is an identity, and so is $X_{j+1}\to X_{j}$.

\end{proof}

Beijing International Center of Mathematics Research, 5 Yiheyuan
Road, Haidian District, Beijing, 100871, China

{\sl E-mail address: cyxu@math.pku.edu.cn}


\begin{thebibliography}{ELMNPM}


 

\bibitem[BCHM10]{BCHM10}
C. Birkar, P. Cascini, C. Hacon, and J. M$^c$Kernan; Existence of
minimal models for varieties of log general type, {\it J. Amer.
Math. Soc.} {\bf 23} (2010), no. 2, 405-468,




\bibitem[GZ95]{GZ95}
R. Gurjar, D. Zhang, {\it $\pi_1$ of smooth points of a log del Pezzo surface is finite. I, II.}, J. Math. Sci. Univ. Tokyo {\bf 1} (1994), no. 1, 137-180;  {\bf 2} (1995), no. 1, 165-196.
 
 \bibitem[HMX12]{HMX12} 
  C. Hacon, J. M$^{\rm c}$Kernan, C. Xu;
 {\it ACC for log canonical thresholds.}  arXiv:1208.4150.

\bibitem[HX09]{HX09}
 A. Hogadi, C. Xu; {\it Degenerations of Rationally Connected Varieties}, Trans. Amer. Math. Soc. {\bf 361} (2009), no. {\bf 7}, 3931-3949.
 







\bibitem[KK11]{KK11}
M. Kapovich, J. Koll\'ar;
{\it Fundamental groups of links of isolated singularities}. arXiv:1109.4047. To appear in {\it J. Amer. Math. Soc.}.


\bibitem[KM98]{KM98}
J. Koll\'ar, S. Mori; 
 Birational geometry of algebraic varieties. With the collaboration of C. H. Clemens and A. Corti. Translated from the 1998 Japanese original. {\it Cambridge Tracts in Mathematics}, {\bf 134}. Cambridge University Press, Cambridge, 1998.

\bibitem[KM99]{KM99}
S. Keel, J. M$^{\rm c}$Kernan;
Rational curves on quasi-projective surfaces. 
Mem. Amer. Math. Soc. {\bf 140} (1999), no. {\bf 669}.

\bibitem[Kollar93]{Kollar93}
 J. Koll\'ar; {\it Shafarevich maps and plurigenera of algebraic varieties}. Invent. Math. {\bf 113} (1993), no. {\bf 1}, 177-215.

\bibitem[Koll\'ar07]{Kollar07}
J. Koll\'ar; 
{\it A conjecture of Ax and degenerations of Fano varieties}, Israel J. Math. {\bf 162} (2007), 235-251.




\bibitem[Koll\'ar11]{Kollar11b}
J. Koll\'ar; 
 {\it New examples of terminal and log canonical singularities}. arXiv:1107.2864.




\bibitem[Koll\'ar12]{Kollar12}
J. Koll\'ar; 
{\it Links of complex analytic singularities.}  arXiv:1209.1754.


\bibitem[Koll\'ar et al.]{Kollaretal} 
J. Koll\'ar et al., {\it Flips and abundance for algebraic threefolds}, Ast\'erisque No. {\bf 211},  Soci\'et\'e Math\'ematique de France, Paris, 1992.


\bibitem[LX11]{LX11}
C. Li, C. Xu;
{\it Special test configurations and K-stability of Fano varieties}. arXiv:1111.5398.

\bibitem[MT84]{MT84}
M. Miyanishi, S. Tsunoda;
{\it Logarithmic del Pezzo surfaces of rank one with non-contractible boundaries}, 
Japan. J. Math. {\bf 10} (1984), no. {\bf 2}, 271-319. 

\bibitem[Mumford61]{Mumford61}
 D. Mumford;  {\it The topology of normal singularities of an algebraic surface and a criterion for simplicity}. Inst. Hautes \'Etudes Sci. Publ. Math. No. {\bf 9} 1961 5-22.





\bibitem[Takayama03]{Takayama03}
S. Takayama; {\it Local simple connectedness of resolutions of log-terminal singularities}, Internat. J. Math. {\bf 14} (2003), no. {\bf 8}, 825-836. 

\bibitem[Xu09]{Xu09}
C. Xu;
{\it Notes on $\pi_1$ of smooth loci of log del Pezzo surfaces},  
Michigan Math. J. {\bf 58} (2009), no. {\bf 2}, 489-515. 

  

\end{thebibliography}
\end{document}